\newcommand{\labbel}{\label}
\newtheorem{theorem}{Theorem}[section]
\newtheorem{lemma}[theorem]{Lemma}
\newtheorem{proposition}[theorem]{Proposition} 
\newtheorem{corollary}[theorem]{Corollary}
\theoremstyle{definition}
\newtheorem{definition}[theorem]{Definition}
\theoremstyle{remark}
\newtheorem{remark}[theorem]{Remark}
\newtheorem{example}[theorem]{Example}
\newcommand{\brfrt}{\hspace{0 pt}}
\DeclareMathOperator{\cf}{cf}
\DeclareMathOperator{\CAP}{CAP}
\begin{document}
 
\title[A product theorem for accumulation properties]{A product theorem for a general accumulation property}

\author{Paolo Lipparini} 
\address{Dipartimento di Matematica\\Viale della MaStellaG Scientifica\\II Universit\`a di Roma (Tor Vergata)\\I-00133 ROME ITALY}
\urladdr{http://www.mat.uniroma2.it/\textasciitilde lipparin}


\keywords{Productive  accumulation, ultrafilter convergence} 

\subjclass[2010]{Primary 54B10, 54A20}

\begin{abstract}
We characterize the situations in which certain accumulation properties 
of topological spaces 
are preserved under
taking products.
\end{abstract} 
 
\maketitle

\section{Introduction} \labbel{intro}

In \cite{mostgen} we introduced the following general notion of
accumulation relative to some set $E$,
and proved that it is connected with covering properties.
If $I$ is a set, $ \mathcal S (I)$ denotes the set of all subsets of $I$.

\begin{definition} \labbel{seq}
Let $I$ be a set,  $E$ be a subset of $ \mathcal S(I)$,
and $(x_i) _{i \in I} $ 
be an $I$-indexed sequence  of elements of some topological space $X$. 

We say that a point $x \in X$ is an
\emph{accumulation point in the sense of} $E$,
or simply an \emph{$E$-accumulation point},  
 of the sequence
$(x_i) _{i \in I} $ 
 if and only if,  
for every open neighborhood $U$
of $x$, the set $\{ i \in I \mid x_i \in U  \} $ belongs to $ E$.

We say that $X$ satisfies the \emph{$E$-accumulation property}
(or that $X$ is \emph{$E$-compact}) 
if and only if every $I$-indexed sequence of elements of $X$
has some (not necessarily unique) accumulation point in the sense of $E$.
 \end{definition}

The definition is motivated by a number of examples. 

As the simplest possible non trivial case, if $ I= \omega$,
and $E$ is the set of all cofinite subsets of $ \omega$, then
$x $ is an
 an $E$-accumulation point 
 of 
$(x_n) _{n \in \omega } $ in the sense of Definition \ref{seq} 
 if and only if
$(x_n) _{n \in \omega } $ 
converges to $x$.

As a slightly more involved example, if $E=D$ is an ultrafilter over $I$, then $x$ is
a $D$-accumulation point
 of the sequence
$(x_i) _{i \in I} $ 
if and only if  
$(x_i) _{i \in I} $
\emph{$D$-converges} to $x$. Thus, in the particular case 
of an ultrafilter, 
the $D$-accumulation property  is 
the usual notion of \emph{$D$-compactness}.

In the above examples $E$ is a filter, 
a condition which, as we shall show, will turn out to provide a dividing
line between very different situations.
We now shall provide an example in which $E$ is not a filter.

\begin{example} \labbel{ex1}   
Suppose that  $| I|= \lambda $ is an infinite regular cardinal, and $E$ 
is the set of all subsets of $I$ having cardinality $\lambda$.
In this case the $E$-accumulation property turns out to be equivalent to a
property usually called
 $\CAP_ \lambda $.
A topological space $X$ 
is said to satisfy $\CAP_ \lambda $ if and only if 
every subset
$Y \subseteq X$ of cardinality $\lambda$ 
has some \emph{complete accumulation point}, that is, 
a point $x \in X$  such that 
$| Y \cap U|= \lambda $, for every neighborhood $U$ of $x$ in $X$.  
\end{example} 

See \cite{mostgen} for a complete discussion 
about $E$-accumulation and its connection with 
other compactness properties.

In this paper we discuss  conditions under which the 
$E$-accumulation property is productive (that is, preserved 
under taking arbitrary products).

We assume no separation axiom.
If not otherwise specified, when we speak of a product of 
a family of topological spaces,
we always mean the Tychonoff product.

\section{Productivity} \labbel{productivity} 

As usual, by a \emph{filter} on some set $I$, we mean
a \emph{nonempty proper} subset  $E $ of $  \mathcal S(I) $
which is closed under finite intersections and under taking supersets.  

\begin{proposition} \labbel{prop}
Let $I, J$ be  sets,  $E$ be a subset of $ \mathcal S(I)$,
and suppose that
 $(X_j) _{j \in J} $ is a sequence of topological spaces.
Consider the following conditions.
  \begin{enumerate}    
\item
The Tychonoff product $ \prod _{ j\in J} X_j$ 
  satisfies the $E$-accumulation property.
\item 
For every $j \in J$, $X_j$ satisfies the $E$-accumulation property.
  \end{enumerate} 

Condition (1) implies Condition (2).

If $E$ is a filter,
then 
Condition (2) implies Condition (1).
 \end{proposition}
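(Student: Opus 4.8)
The plan is to treat the two implications separately, by different devices. For (1) $\Rightarrow$ (2) I would lift a sequence from a single factor into the product and push the resulting accumulation point back down; here no hypothesis on $E$ is needed, only that the projections are continuous. For the converse, under the filter assumption, I would construct an accumulation point of a product sequence coordinate by coordinate, and the two filter axioms (closure under finite intersections and under supersets) are exactly what makes the assembled point work.

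For (1) $\Rightarrow$ (2): fix $j_0 \in J$ and a sequence $(x_i)_{i \in I}$ in $X_{j_0}$. Choosing a base point $a = (a_j)_{j \in J}$ of the product, define $f_i \in \prod_{j} X_j$ by $(f_i)_{j_0} = x_i$ and $(f_i)_j = a_j$ for $j \neq j_0$. By (1) the sequence $(f_i)_{i \in I}$ has an $E$-accumulation point $p = (p_j)_{j}$, and I claim $p_{j_0}$ works for $(x_i)_{i \in I}$: given an open neighborhood $U$ of $p_{j_0}$, the preimage $\pi_{j_0}^{-1}(U)$ is an open neighborhood of $p$, and since $f_i \in \pi_{j_0}^{-1}(U)$ exactly when $x_i \in U$, the set $\{ i \mid x_i \in U \} = \{ i \mid f_i \in \pi_{j_0}^{-1}(U)\}$ lies in $E$. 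This uses only continuity of $\pi_{j_0}$.

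For (2) $\Rightarrow$ (1), with $E$ a filter: given $(f_i)_{i \in I}$ in $\prod_{j} X_j$, apply (2) to each coordinate sequence $(\pi_j(f_i))_{i \in I}$ to obtain an $E$-accumulation point $p_j \in X_j$, and set $p = (p_j)_{j}$. To check that $p$ is an $E$-accumulation point it suffices to test basic open neighborhoods $U = \prod_{j} U_j$ of $p$, where $U_j = X_j$ off a finite set $\{ j_1, \dots, j_n \}$. Since $f_i \in U$ iff $\pi_{j_k}(f_i) \in U_{j_k}$ for every $k$, one has
\[
\{ i \mid f_i \in U \} = \bigcap_{k=1}^{n} \{ i \mid \pi_{j_k}(f_i) \in U_{j_k} \}.
\]
Each set on the right belongs to $E$ because $p_{j_k}$ accumulates its coordinate sequence, so the finite intersection belongs to $E$ by closure under finite intersections (the empty intersection being $I \in E$). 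An arbitrary open neighborhood of $p$ contains such a basic $U$, so $\{ i \mid f_i \in U \} \in E$ follows by upward closure.

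The step that carries the real weight is this last one: it is precisely the two filter properties — finite-intersection closure, needed to survive the passage to a box constraining finitely many coordinates, and upward closure, needed to descend from basic to arbitrary neighborhoods — that turn the coordinatewise data into a genuine product accumulation point, and these are exactly the properties that can fail when $E$ is not a filter. Two bookkeeping remarks: the coordinatewise choice of the $p_j$ invokes the Axiom of Choice, and both implications tacitly assume the factors are nonempty, since otherwise an empty factor makes the product empty, so that (1) holds vacuously while (2) may fail.
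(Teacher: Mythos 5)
Your proof is correct and follows essentially the same route as the paper, which factors the argument through a local lemma (Lemma \ref{lempr}): the forward direction projects along a sub-basic cylinder neighborhood $\pi_{j_0}^{-1}(U)$ exactly as the paper's $V=\prod_j V_j$, and the converse assembles a coordinatewise accumulation point and tests it on basic boxes using closure under finite intersections and supersets. Your added remarks on the use of choice and on nonempty factors are sound but not part of the paper's argument.
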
 

In fact, a local version of the result holds.
Proposition \ref{prop} is immediate from the following lemma.

\begin{lemma} \labbel{lempr}
Let $I, J$ be  sets,  $E$ be a subset of $ \mathcal S(I)$,
and suppose that
 $(X_j) _{j \in J} $ is a sequence of topological spaces.
Suppose that $ (x _{j,i} ) _{j \in J, i \in I} $ is an
$I$-indexed sequence
 of elements of $\prod _{ j\in J} X_j$,
and that 
$z= (z_j) _{j \in J}  $ is an element of $  \prod _{ j\in J} X_j$. If
  \begin{enumerate}    \item   
$z $ is an
$E$-accumulation point of 
$ (x _{j,i} ) _{j \in J, i \in I} $
in $\prod _{ j\in J} X_j$,
\end{enumerate} 

then
  \begin{enumerate}    \item[(2)]   
for every $\bar\jmath \in J$, the 
  element $z _{\bar\jmath}  \in  X _{\bar\jmath}$ is an
$E$-accumulation point of the
$I$-indexed sequence
$ (x _{\bar\jmath,i} ) _{ i \in I} $ of elements
of $X _{\bar\jmath}$.
\end{enumerate} 

If $E$ is a filter, then (2) implies (1).
 \end{lemma}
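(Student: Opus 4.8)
The plan is to prove the two implications of Lemma~\ref{lempr} separately.

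For the direction $(1)\Rightarrow(2)$, which holds for an arbitrary $E\subseteq\mathcal S(I)$, I would fix $\bar\jmath\in J$ and an arbitrary open neighborhood $V$ of $z_{\bar\jmath}$ in $X_{\bar\jmath}$. The key observation is that the preimage $U=\pi_{\bar\jmath}^{-1}(V)$, where $\pi_{\bar\jmath}$ denotes the projection onto the $\bar\jmath$-th coordinate, is a subbasic (hence basic, hence open) neighborhood of $z$ in the Tychonoff product. Since $x_{\bar\jmath,i}\in V$ is equivalent to $(x_{j,i})_{j\in J}\in U$, the two index sets coincide:
\begin{equation*}
\{\,i\in I\mid x_{\bar\jmath,i}\in V\,\}=\{\,i\in I\mid (x_{j,i})_{j\in J}\in U\,\}.
\end{equation*}
By hypothesis~(1) the right-hand set belongs to $E$, so the left-hand set does too, and since $V$ was arbitrary, $z_{\bar\jmath}$ is an $E$-accumulation point of $(x_{\bar\jmath,i})_{i\in I}$. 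No filter assumption is needed here because we only ever look at one coordinate at a time.

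For the direction $(2)\Rightarrow(1)$ under the assumption that $E$ is a filter, I would start from an arbitrary open neighborhood $U$ of $z$ in the product and shrink it to a basic open set. By definition of the Tychonoff topology, there is a finite set $F\subseteq J$ and open sets $V_{\bar\jmath}\ni z_{\bar\jmath}$ in $X_{\bar\jmath}$ for $\bar\jmath\in F$ such that $\prod_{\bar\jmath\in F}\pi_{\bar\jmath}^{-1}(V_{\bar\jmath})\subseteq U$. For each $\bar\jmath\in F$, hypothesis~(2) gives that $A_{\bar\jmath}=\{\,i\in I\mid x_{\bar\jmath,i}\in V_{\bar\jmath}\,\}$ belongs to $E$. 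The point $(x_{j,i})_{j\in J}$ lies in the basic open set exactly when $i\in\bigcap_{\bar\jmath\in F}A_{\bar\jmath}$, so
\begin{equation*}
\bigcap_{\bar\jmath\in F}A_{\bar\jmath}\subseteq\{\,i\in I\mid (x_{j,i})_{j\in J}\in U\,\}.
\end{equation*}

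The main obstacle, and the only place where the filter hypothesis is actually used, is the last step: I need to conclude that $\{\,i\mid (x_{j,i})_{j\in J}\in U\,\}\in E$. Because $F$ is finite and $E$ is closed under finite intersections, $\bigcap_{\bar\jmath\in F}A_{\bar\jmath}\in E$; because $E$ is closed under supersets, the larger set is in $E$ as well. Since $U$ was an arbitrary neighborhood of $z$, this shows $z$ is an $E$-accumulation point of the product sequence, establishing~(1). I would remark that both closure properties of a filter are genuinely needed, and that this is precisely where a general $E$ can fail: if $E$ is not closed under finite intersections, the membership of each $A_{\bar\jmath}$ in $E$ does not force their intersection into $E$, which foreshadows the non-productivity phenomena discussed later in the paper.
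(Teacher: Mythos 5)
Your proposal is correct and follows essentially the same argument as the paper: for $(1)\Rightarrow(2)$ you pull back a coordinate neighborhood to a subbasic neighborhood of $z$ and observe the two index sets coincide, and for $(2)\Rightarrow(1)$ you shrink to a basic open box and use closure of $E$ under finite intersections and supersets, exactly as in the paper's proof. The only difference is notational (projection preimages versus explicit products), so there is nothing to add.
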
  

 \begin{proof}
Suppose that (1) holds.
For each $\bar\jmath \in J$, and 
for every open neighborhood $U  $ of 
$z_{\bar\jmath}$ in   $X_{\bar\jmath}$, 
consider the open neighborhood 
$V $ of
$z$  in $\prod _{ j\in J} X_j$
defined by   
$V = \prod _{ j\in J} V_j $,
where $V_j= X_j$,
if $j \not= \bar\jmath$,
and $V _{\bar\jmath} = U $.   

Then 
$\{ i \in I \mid x _{\bar\jmath,i} \in U  \text{ in $X _{\bar\jmath} $} \} 
=
\{ i \in I \mid (x _{j,i}) _{j \in J}  \in V \text{ in $\prod _{ j\in J} X_j$}  \}
\in E
$,
since 
$z $ is an
$E$-accumulation point of 
$ (x _{j,i} ) _{j \in J, i \in I} $ in $\prod _{ j\in J} X_j$.
 
Conversely, suppose that $E$ is a filter,
and that (2) holds. Let 
$U$ be an open  neighborhood of $z$ in  
$\prod _{ j\in J} X_j$.
Then there is an open neighborhood 
$U'$ of $z$
such that 
$U' \subseteq U$ and 
$U' = \prod _{ j\in J} U_j $,
where, for every $j \in J$, 
$U_j$ is an open neighborhood of $z_j$ in $X_j$, 
and there is some finite $F \subseteq J$ such that 
$U_j = X_j$, for every $j \in J \setminus F$.

By (2), for every $j \in F$,  
$e_j=\{ i \in I \mid x _{j,i}  \in U_j  \} \in  E$,
hence $\{ i \in I \mid (x _{j,i}) _{j\in J}   \in U'  \}= \bigcap _{j \in F} e_j \in E $,
since $E$ is closed under finite intersection.
Since  $U' \subseteq U$, then 
$\{ i \in I \mid (x _{j,i}) _{j\in J}   \in U'  \} \subseteq 
\{ i \in I \mid (x _{j,i}) _{j\in J}   \in U  \} \in E$,
since $E$ is closed under supersets.

Since $U$ was an arbitrary open neighborhood of $z$ in $\prod _{ j\in J} X_j$,
the above argument shows that  
$z $ is an
$E$-accumulation point of 
$ (x _{j,i} ) _{j \in J, i \in I} $ in $\prod _{ j\in J} X_j$.
\end{proof}

As the example of complete accumulation points
shows, restricting Definition \ref{seq} only to filters causes a
big loss in generality. However we can show that, in some local sense,
 we get an equivalent definition  when we let $E$ vary only among filters.
This fact is the main key for  applications to products.

\begin{lemma} \labbel{local}
Suppose that  $I$ is a set,  $E$ is a subset of $ \mathcal S(I)$, and
$X$ is a topological space.
Let $ (x _{i} ) _{ i \in I} $
 be some given
$I$-indexed sequence
 of elements of $ X$, and suppose that 
$x \in X$ is an $E$-accumulation point of
$ (x _{i} ) _{ i \in I} $.

If, for every neighborhood $U$ of $x$ in $X$, we put 
$f_U = \{ i \in I \mid x_i \in U \}$,
then 
$F = \{ f_U \mid U \text{ is an open neighborhood of $x$ in } $X$ \}$  
is closed under finite intersections, $F \subseteq E$, and $x $ is an $F$-accumulation point of
$ (x _{i} ) _{ i \in I} $.  
\end{lemma}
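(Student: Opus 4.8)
The plan is to verify the three assertions directly from the definitions, the only nontrivial ingredient being a single set-theoretic identity relating the sets $f_U$ under intersection.

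First I would dispose of the inclusion $F \subseteq E$, which is immediate from the hypothesis. Since $x$ is an $E$-accumulation point of $(x_i)_{i \in I}$, Definition \ref{seq} guarantees that for every open neighborhood $U$ of $x$ we have $f_U = \{ i \in I \mid x_i \in U \} \in E$. As every element of $F$ is of this form, $F \subseteq E$ follows at once.

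Next I would treat closure under finite intersections. The key observation is the identity $f_U \cap f_V = f_{U \cap V}$, valid for any two open neighborhoods $U, V$ of $x$, since $x_i \in U \cap V$ holds precisely when $x_i \in U$ and $x_i \in V$. As $U \cap V$ is again an open neighborhood of $x$ (being a finite intersection of open sets, all containing $x$), the set $f_{U \cap V}$ belongs to $F$ by the very definition of $F$, so $F$ is closed under binary intersections, and hence under all nonempty finite intersections by a trivial induction. Should the empty intersection be demanded as well, one simply notes that $X$ itself is an open neighborhood of $x$ with $f_X = I$, so $I \in F$.

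Finally, the $F$-accumulation of $x$ is essentially a restatement of the construction: given any open neighborhood $U$ of $x$, the set $\{ i \in I \mid x_i \in U \}$ is by definition equal to $f_U$, which lies in $F$; this is exactly the condition required by Definition \ref{seq} for $x$ to be an $F$-accumulation point of $(x_i)_{i \in I}$. I anticipate no genuine obstacle here, as the entire lemma is a direct unwinding of the definitions; the only step meriting a moment's attention is verifying the intersection identity together with the fact that $U \cap V$ remains an open neighborhood of $x$.
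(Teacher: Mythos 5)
Your proof is correct and follows essentially the same route as the paper's: the inclusion $F \subseteq E$ is read off from the definition, closure under intersections comes from the identity $f_U \cap f_V = f_{U \cap V}$ together with the fact that $U \cap V$ is again an open neighborhood of $x$, and the $F$-accumulation claim is a direct unwinding of the definition of $F$. The extra remarks on induction for nonempty finite intersections and on the empty intersection via $f_X = I$ are harmless additions not present in (and not needed by) the paper's argument.
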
 

\begin{proof}
That $ F \subseteq E$ is immediate from Definition \ref{seq},
and the assumption that $x $ is an $E$-accumulation point of
$ (x _{i} ) _{ i \in I} $.

If $U$ and $V$ are open neighborhoods of $x$, then
$U \cap V$ is an open neighborhood of $x$, 
and $f_U \cap f_V  = f _{U \cap V} \in F $,
hence $F$  is closed under intersections. 

By the definition of $F$, $x $ is an $F$-accumulation point of
$ (x _{i} ) _{ i \in I} $.
\end{proof}

Lemma \ref{local} may be seen as a general version of the fact that
an element $x \in X$ belongs to the closure of some set $Y \subseteq X$ if and only if 
there is some net of elements of $Y$ converging to $x$ \cite[Proposition 1.6.3]{E},
or, in equivalent form, if and only if there exists a filter-base
consisting of subsets of $Y$ converging to $x$ \cite[Proposition 1.6.9]{E}.
Indeed, this latter result is a consequence of the particular case 
of Lemma \ref{local} when $I= Y$ and
 $E= \mathcal S (I) \setminus \{ \emptyset   \} $.
 
In a rough but suggestive sense, Lemma \ref{local} asserts that, locally,
``accumulation'' becomes a form of ``convergence'' (an arbitrary
 $E$ corresponds to some notion of accumulation, but a \emph{filter} $F$ 
corresponds to a notion of convergence).
Since most  notions of convergence are productive 
(the last statement in Proposition \ref{prop}, as far as we are concerned), and
since any failure of accumulation implies some local failure,  
we get that an \emph{accumulation} 
property is productive,  for some given space $X$, if and only if
$X$ satisfies some stronger notion of \emph{convergence}.

\begin{theorem} \labbel{prod} 
Suppose that $X$ is a topological space, $I$ is a set, and $E$ is a subset of 
$ \mathcal S(I)$ closed under taking supersets.
Then the following conditions are equivalent.
  \begin{enumerate}
\item 
Every power of $X$ satisfies the $E$-accumulation property.
\item
$X$ satisfies the $F$-accumulation property,
for some filter $F \subseteq E$.
\item
$X$ satisfies the $F$-accumulation property,
for some filter $F $ which is  maximal among the filters
contained in $E$.
\item
$X^ \delta $ satisfies the $E$-accumulation property, where $\delta$ 
is the cardinality of the set of all the 
filters which are  maximal among the filters
contained in $E$.
\item
$X^ \delta $ satisfies the $E$-accumulation property, for $\delta =| X| ^{|I|} $. 
  \end{enumerate}
\end{theorem}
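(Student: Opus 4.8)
The plan is to establish the cycle $(2)\Rightarrow(1)\Rightarrow(4)\Rightarrow(3)\Rightarrow(2)$ and then to attach condition (5) through $(1)\Rightarrow(5)$ and $(5)\Rightarrow(2)$. At the outset I would record one elementary monotonicity fact used throughout: if $B_1\subseteq B_2\subseteq\mathcal S(I)$, then every $B_1$-accumulation point of a sequence is a $B_2$-accumulation point, since $\{i : x_i\in U\}\in B_1$ forces $\{i : x_i\in U\}\in B_2$; hence $B_1$-compactness implies $B_2$-compactness. In words, a larger family expresses a weaker accumulation demand, so a \emph{maximal} filter contained in $E$ gives the easiest compactness among filters inside $E$. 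With this in hand three implications are immediate: $(1)\Rightarrow(4)$ and $(1)\Rightarrow(5)$ hold because $X^\delta$ is itself a power of $X$, and $(3)\Rightarrow(2)$ holds because a filter maximal among those contained in $E$ is in particular a filter contained in $E$. For $(2)\Rightarrow(1)$ I would take a filter $F\subseteq E$ witnessing (2); applying the filter direction of Lemma \ref{lempr} to each $I$-indexed sequence in an arbitrary power $X^J$ shows every power of $X$ is $F$-compact, and then the monotonicity remark (via $F\subseteq E$) promotes this to $E$-compactness of every power.

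The first substantial step is $(5)\Rightarrow(2)$, where the idea is a single universal sequence. Writing $S=X^I$ for the set of all $I$-indexed sequences in $X$, so that $|S|=|X|^{|I|}=\delta$, I would identify $X^\delta$ with $X^S$ and define $(y_i)_{i\in I}$ in $X^S$ by letting the $s$-th coordinate of $y_i$ be $s_i$; thus the projection of $(y_i)$ to the coordinate $s$ is exactly the sequence $s$ itself. By (5) this sequence has an $E$-accumulation point $z=(z_s)_{s\in S}$. Applying Lemma \ref{local} to $z$ produces a family $F$ of neighbourhood traces that is closed under finite intersections, satisfies $F\subseteq E$, and of which $z$ is an $F$-accumulation point; since $E$ is closed under supersets, the upward closure $\hat F$ of $F$ is again a filter contained in $E$. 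By the coordinatewise direction of Lemma \ref{lempr} (which holds for an arbitrary family, not only for filters) the point $z_s$ is an $F$-accumulation point of the $s$-th coordinate sequence, that is of $s$, and hence an $\hat F$-accumulation point of $s$ by monotonicity. As $s$ ranges over all sequences in $X$, this shows $X$ is $\hat F$-compact, giving (2). The only point requiring care is that $\hat F$ be a proper filter; this is automatic when $\emptyset\notin E$, and when $\emptyset\in E$ one has $E=\mathcal S(I)$, in which case any principal ultrafilter already witnesses (2).

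The second substantial step, and the one I expect to be the main obstacle, is $(4)\Rightarrow(3)$, which I would prove by contraposition. Let $M$ be the set of filters maximal among those contained in $E$, so that $|M|=\delta$ and $X^\delta\cong X^M$, and suppose (3) fails, so that for each $F'\in M$ there is a sequence $(x^{F'}_i)_{i\in I}$ with no $F'$-accumulation point. Assembling these coordinatewise into a sequence $(y_i)_{i\in I}$ in $X^M$, whose $F'$-coordinate sequence is $(x^{F'}_i)$, I claim $(y_i)$ has no $E$-accumulation point, which is exactly the negation of (4). Indeed, if $z$ were one, Lemma \ref{local} would yield a finite-intersection-closed family $G\subseteq E$ with $z$ a $G$-accumulation point; its upward closure $\hat G$ is a filter contained in $E$, and by Zorn's lemma (the union of a chain of filters contained in $E$ is again such a filter, proper because here $\emptyset\notin E$) it extends to some $F^*\in M$. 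Then $G\subseteq\hat G\subseteq F^*$, and by the coordinatewise direction of Lemma \ref{lempr} the coordinate $z_{F^*}$ is a $G$-accumulation point of $(x^{F^*}_i)$; by monotonicity $z_{F^*}$ is therefore an $F^*$-accumulation point of $(x^{F^*}_i)$, contradicting the choice of $(x^{F^*}_i)$.

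The delicate feature is precisely the direction of monotonicity: enlarging the abstractly extracted filter $G$ up to the maximal filter $F^*$ only weakens the accumulation requirement, so the accumulation point survives the extension and collides with the assumed non-compactness at the coordinate indexed by $F^*$. Matching an abstractly produced filter-trace to one of the prescribed ``bad'' coordinates via a maximal extension inside $E$ is the heart of the argument, and correctly combining Lemma \ref{local} (which extracts a filter base from an accumulation point) with Lemma \ref{lempr} (which passes accumulation to coordinates) is where the care is needed. Once the cycle $(2)\Rightarrow(1)\Rightarrow(4)\Rightarrow(3)\Rightarrow(2)$ closes and $(1)\Rightarrow(5)\Rightarrow(2)$ is adjoined, all five conditions are equivalent.
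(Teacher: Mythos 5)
Your proposal is correct and follows essentially the same route as the paper: the same implication cycle, the same use of Lemma \ref{local} to extract a filter(-base) from an $E$-accumulation point in the power, the same diagonal sequences indexed by the maximal filters (for $(4)\Rightarrow(3)$) and by all $I$-indexed sequences (for $(5)\Rightarrow(2)$), and the same projection via Lemma \ref{lempr}. Your explicit handling of properness when $\emptyset\in E$ is a minor point the paper leaves implicit, but it does not change the argument.
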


 \begin{proof}
(1) $\Rightarrow $  (4), (1) $\Rightarrow $  (5) and (3) $\Rightarrow $  (2) are trivial.
We shall complete the proof  by proving 
(2) $\Rightarrow $  (1), (4) $\Rightarrow $  (3) and (5) $\Rightarrow $  (2).

(2) $\Rightarrow $  (1) By the last statement in Proposition \ref{prop},
every power of $X$ satisfies the $F$-accumulation property. Since 
$F \subseteq E$,  the    $E$-accumulation property follows trivially.

In order to prove that (4) implies (3), we shall suppose that (4) holds and
(3) fails, and we shall reach a contradiction. If (3) fails, 
then, for every maximal filter $F \subseteq E$, there 
is a sequence $ (x _{i} ) _{ i \in I} $ such that
no $x \in X$ is an $F$-accumulation point of 
$ (x _{i} ) _{ i \in I} $.

Enumerate all maximal filters $ \subseteq E$
as $(F_j) _{j \in \delta } $, and, for each $ j \in \delta $,
choose a sequence      
$ (x _{j,i} ) _{ i \in I} $
which has no $F_j$-accumulation point in $X$.
  
Consider $ (x _{j,i} ) _{  j \in \delta , i \in I} $
as a single $I$-indexed sequence of elements of $X^ \delta $.
By (4),  $ (x _{j,i} ) _{  j \in \delta , i \in I} $ has some 
$E$-accumulation point $z= (z _j) _{j \in \delta } $
in $X^ \delta $.  
By applying Lemma \ref{local} to 
$ (x _{j,i} ) _{  j \in \delta , i \in I} $ and $z$ in  $X^ \delta $,
we get some $F' \subseteq E$ such that $F'$ is closed 
under finite intersections,
and $z$ is an $F'$-accumulation point of $ (x _{j,i} ) _{  j \in \delta , i \in I} $. 
If $F''$ is the filter generated by $F'$, then
also $F'' \subseteq E$,     since $E$ is closed under taking supersets.
By an easy application of Zorn's
Lemma, there is a filter $F$  such that $F'' \subseteq F \subseteq E$,
and $F$  is maximal among the filters contained in $E$.  
Since $F' \subseteq F$, then  $z$ is an $F$-accumulation point of $ (x _{j,i} ) _{  j \in \delta , i \in I} $.  

By the construction of the sequence $(F_j) _{j \in \delta } $, there is $\bar\jmath \in \delta $
such that $F=F _{\bar\jmath} $.  By Lemma \ref{lempr} (1) $\Rightarrow $  (2),
 $z_{\bar\jmath}$ is an $F$-accumulation point of $ (x _{\bar\jmath,i} ) _{  i \in I} $ 
in $X$. This contradicts the choice of the sequence $ (x _{\bar\jmath,i} ) _{  i \in I} $.
We have reached a contradiction, thus (4) $\Rightarrow $  (3) is proved.
 
The proof that (5) implies (2) is rather similar, and somewhat simpler. 
We shall suppose that (5) holds and that
(2) fails, and we shall reach a contradiction. 
This time,
enumerate all 
$I$-indexed sequences of elements of $X$,
as
$ (x _{j,i} ) _{  j \in \delta , i \in I} $.
This is possible since there are exactly
$\delta$-many such sequences.

By (5), the single sequence
 $ (x _{j,i} ) _{  j \in \delta , i \in I} $ in $X^ \delta $
 has some 
$E$-\brfrt accumulation point $z= (z _j) _{j \in \delta } $
in $X^ \delta $.  
By Lemma \ref{local}, and since $E$ is closed under supersets, 
there is a filter $F \subseteq E$ such that    $z$ is an $F$-accumulation point of $ (x _{j,i} ) _{  j \in \delta , i \in I} $.  
Since (2) fails,
 there 
is a sequence $ (x _{i} ) _{ i \in I} $ such that
no $x \in X$ is an $F$-accumulation point of 
$ (x _{i} ) _{ i \in I} $.
Since  $ (x _{j,i} ) _{  j \in \delta , i \in I} $  enumerates
all possible $I$-indexed sequences of elements of $X$,
there is $\bar\jmath \in \delta $ such that 
$ x _{\bar\jmath ,i}= x_i $, for every $i \in I$, thus 
 $ (x _{\bar\jmath ,i} ) _{ i \in I} $ 
witnesses the failure of the $F$-accumulation property 
for $X$.
However, 
 Lemma \ref{lempr} (1) $\Rightarrow $  (2),
with $F$  in place of $E$, implies that
 $z_{\bar\jmath}$ is an $F$-accumulation point of $ (x _{\bar\jmath,i} ) _{  i \in I} $, a contradiction.
 \end{proof} 

By Proposition \ref{esmall} in Section \ref{concl} below, if 
Condition (2) in Theorem \ref{prod} holds, then $F$  is actually
an ultrafilter over $I$, except for very special cases. This fact might explain 
 the necessity of the use of ultrafilters in many product theorems.

The simplest case in Theorem \ref{prod}
is of course when $E$ itself is a filter, in which case we get that
the $E$-accumulation property is productive, 
 a fact which follows already from Proposition \ref{prop}.

The case of complete accumulation points is more significant.
As a corollary
of Theorem \ref{prod}, we get a result by Ginsburg  and Saks \cite{GS,Sa}, asserting that, for every topological space $X$, 
every power of $X$ satisfies $\CAP_ \lambda $
if and only if 
$X$ is $D$-compact, for some ultrafilter $D$ uniform over $\lambda$.
 We shall prove a slightly more general version which deals with ordinals.
It is not completely clear how much significant this improvement is,
however we include it, since its proof involves no new technical detail.

\begin{definition} \labbel{capa} 
If $\alpha$ is an ordinal,  $X$ is a topological space, 
and $( x_ \gamma ) _{ \gamma \in \alpha } $ is a sequence of elements of $X$,
we say 
that $x \in X$ is an
\emph{$\alpha$-complete accumulation point} of 
$( x_ \gamma ) _{ \gamma \in \alpha } $
if and only if, for every neighborhood $U$ of $x$,
the set $\{ \gamma \in \alpha \mid x_ \gamma \in U\}$ 
has order type $\alpha$.

We say that $X$ satisfies
the \emph{$\alpha$-complete accumulation property}, 
$ \CAP ^*_{ \alpha } $ for short, if and only if 
every $\alpha$-indexed sequence $( x_ \gamma ) _{ \gamma \in \alpha } $
of elements of $X$ has some $\alpha$-complete accumulation point in $X$.

It is easy to see that, if $\alpha$ is a \emph{regular} cardinal, then
the starred $ \CAP^* _{ \alpha } $ 
 turns out to be
equivalent to the unstarred $ \CAP _{ \alpha } $ 
of Example \ref{ex1}.
If $\alpha$ is a singular cardinal, then $ \CAP^* _{ \alpha } $ 
is
equivalent to the conjunction of $ \CAP _{ \alpha } $ and 
$ \CAP _{ \cf\alpha } $. See \cite[Proposition 3.3]{tproc2}.  
\end{definition}   

\begin{remark} \labbel{part}   
Notice that the notions in the above definition are the particular cases
of Definition \ref{seq} when $I=\alpha$  
and $E$ is the set of all subsets of $\alpha$ having order type $\alpha$.
\end{remark} 

\begin{definition} \labbel{ordunif}
As usual, a filter $F$ over some set $I$ is said to be \emph{uniform}
if and only if every element of $F$ has cardinality $| I|$. We generalize this notion to ordinals.

If $\alpha$ is an ordinal, we say that a filter $F$ over $\alpha$ 
is \emph{ordinal-uniform} if and only if every
member of $F$ has order type $\alpha$. 
 \end{definition}   

\begin{remark} \labbel{unifordcard}  
If
$\alpha$ is an infinite cardinal, then $F$ over $\alpha$ is uniform
if and only if it is ordinal-uniform. When $\alpha$ is not a cardinal,
the two notions are distinct. Ordinal-uniformity implies uniformity, but
the converse does not necessarily hold.
 \end{remark}  

As far as ordinal-uniformity is concerned, there is a strong dichotomy
according to the form of the ordinal on which a filter is
taken.

\begin{proposition} \labbel{dich}  
(1) If $\alpha= \omega ^ \eta $ (ordinal exponentiation), for some ordinal $\eta$ (in particular, when $\alpha$ 
is an infinite cardinal), then  any filter ordinal-uniform  over $\alpha$
can be extended to an ultrafilter  ordinal-uniform  over $\alpha$.

(2) If $\alpha$ has not the form $ \omega ^ \eta $, for some $\eta$,
then there is no  ordinal-uniform  ultrafilter over $\alpha$.
\end{proposition}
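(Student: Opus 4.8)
The approach rests on the classical characterization of the ordinals $\omega ^ \eta $ (ordinal exponentiation) as the \emph{additively indecomposable} ordinals, that is, those $\alpha >0$ for which $\beta + \gamma < \alpha $ whenever $\beta , \gamma < \alpha $. The plan is to first isolate the sharper fact that $\omega ^ \eta $ is closed under the \emph{natural (Hessenberg) sum} $\oplus $: if $\beta , \gamma < \omega ^ \eta $, then, reading off Cantor normal forms, all exponents occurring in $\beta $ and $\gamma $ are $< \eta $, hence so are those of $\beta \oplus \gamma $, giving $\beta \oplus \gamma < \omega ^ \eta $. From this I would extract the combinatorial heart of the statement and then dispatch the two parts.

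The key claim I would prove as a lemma is the following: if $\alpha = \omega ^ \eta $ and a set $W$ of order type $\alpha $ is partitioned as $W = S \cup T$ with $S \cap T = \emptyset $, then $S$ or $T$ already has order type $\alpha $. To see this, recall that the natural sum dominates the order type of any interleaving, so $\alpha = \mathrm{ot}(W) \le \mathrm{ot}(S) \oplus \mathrm{ot}(T)$; were both $\mathrm{ot}(S) < \alpha $ and $\mathrm{ot}(T) < \alpha $, the closure under $\oplus $ recorded above would force $\mathrm{ot}(S) \oplus \mathrm{ot}(T) < \alpha $, a contradiction. This is the precise point where the hypothesis $\alpha = \omega ^ \eta $ is used, and I expect it to be the main obstacle: everything reduces to the fact that for these, and only these, $\alpha $ the order-type-$\alpha $ subsets form the complement of a proper ideal.

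For part (2), suppose $\alpha $ is not of the form $\omega ^ \eta $. The case $\alpha =0$ is vacuous, so assume $\alpha \ge 1$; then $\alpha $ is additively decomposable, say $\alpha = \beta + \gamma $ with $\beta , \gamma < \alpha $. Splitting $\alpha $ into the initial segment $A=[0,\beta )$ and the final segment $B=[\beta , \alpha )$, of order types $\beta $ and $\gamma $, yields a partition of $\alpha $ into two sets each of order type $< \alpha $. Any ultrafilter $D$ over $\alpha $ contains exactly one of $A$, $B$, and that member has order type $< \alpha $; hence $D$ is not ordinal-uniform.

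For part (1), I would pass to the ideal $\mathcal I = \{ S \subseteq \alpha \mid \mathrm{ot}(S) < \alpha \}$ of small sets. Closure under subsets is clear, and closure under finite unions is again the closure of $\omega ^ \eta $ under $\oplus $ (via $\mathrm{ot}(S\cup T) \le \mathrm{ot}(S) \oplus \mathrm{ot}(T)$); since $\mathrm{ot}(\alpha )=\alpha $, the set $\alpha $ is not small, so $\mathcal I$ is a proper ideal. A filter over $\alpha $ is ordinal-uniform exactly when it is disjoint from $\mathcal I$, and an ultrafilter is disjoint from $\mathcal I$ exactly when it extends the dual filter $\mathcal I ^* = \{ \alpha \setminus A \mid A \in \mathcal I \}$. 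Thus, given an ordinal-uniform filter $F$, it suffices to verify that $F \cup \mathcal I ^*$ has the finite intersection property, since then any ultrafilter $D$ extending it is ordinal-uniform and contains $F$. A typical such finite intersection has the form $B \setminus A$, where $B$ is a finite intersection of members of $F$, hence $B \in F$ with $\mathrm{ot}(B)=\alpha $, and $A$ is a finite union of members of $\mathcal I$, hence $A \in \mathcal I$; as $\mathrm{ot}(B \cap A) \le \mathrm{ot}(A) < \alpha $, the Key Lemma applied to $B = (B\cap A) \cup (B \setminus A)$ gives $\mathrm{ot}(B \setminus A) = \alpha $, so in particular $B \setminus A \ne \emptyset $. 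This establishes the finite intersection property and finishes the proof; note that once the Key Lemma is in hand, part (2) is a one-line decomposition and part (1) is a routine ultrafilter-extension argument.
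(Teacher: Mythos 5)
Your proposal is correct and takes essentially the same route as the paper: part (2) via the Cantor normal form decomposition of $\alpha$ into two summands of smaller order type, and part (1) by showing that $F$ together with the filter of co-small sets (your $\mathcal I^*$ is exactly the paper's $G$) has the finite intersection property and extending to an ultrafilter, the one substantive point being that subsets of $\omega^\eta$ of order type $<\omega^\eta$ are closed under finite unions. Your Hessenberg-sum argument simply supplies the detail that the paper asserts without proof, so there is nothing to add.
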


 \begin{proof} 
(1) We show that if $\alpha= \omega ^ \eta $, $F$  is a filter,
every element of $F$  has order type $\alpha$, and  
$G= \{ Z \subseteq \alpha \mid  \alpha \setminus Z \text{ has order type }
< \alpha \}$, 
then 
$F \cup  G$ has the finite intersection property.  
Indeed, due to the particular form of $\alpha$,  the union of a finite number
of subsets of $\alpha$ of order type $<\alpha$ has order type $<\alpha$, hence 
$G$ is closed under finite intersections. Moreover, the intersection
of an element of $F$  and an element of $G$ is nonempty (in this case, the particular form
of $\alpha$ plays no role).
Hence $F \cup  G$ can be extended to an ultrafilter $D$ over $\alpha$, 
and $D$ is ordinal-uniform, since $D$ extends $G$.

(2)
There are 
disjoint $Z_1 , Z_2 \subseteq  \alpha $ such that $Z_1 \cup Z_2 = \alpha $, and
both $Z_1 $ and $  Z_2 $ have order type $<  \alpha $ (just express 
$\alpha$ in normal form, and consider it as a sum of two summands). 
Since any ultrafilter $D$ 
over $\alpha$ must contain one of $Z_1 $ or $  Z_2 $, then $D$ is not ordinal-uniform.
\end{proof} 

\begin{corollary} \labbel{saks}
For every topological space $X$ and every ordinal $\alpha$,
the following conditions are equivalent.
  \begin{enumerate}    
\item  
Every power of $X$ satisfies $\CAP^*_ \alpha  $.
\item
$X$ is $F$-compact, for some filter $F$ ordinal-uniform over $ \alpha $.
\item
$X^ \delta $ satisfies $\CAP^*_ \alpha $, for $\delta= \min \{ 2 ^{ 2 ^{| \alpha |} }  , |X| ^{| \alpha |}  \}  $.
 \end{enumerate}  

If, in addition, $\alpha= \omega ^ \eta $ (ordinal exponentiation), for
some ordinal $\eta$,
then the preceding conditions are also equivalent to:
  \begin{enumerate}    
\item[(4)] 
$X$ is $D$-compact, for some ultrafilter $D$ ordinal-uniform over $ \alpha $.
  \end{enumerate} 
 \end{corollary}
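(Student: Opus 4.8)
The plan is to regard $\CAP^*_\alpha$ as an instance of the $E$-accumulation property and then read the corollary off Theorem \ref{prod}. Following Remark \ref{part}, set $I=\alpha$ and let $E$ be the family of all subsets of $\alpha$ of order type $\alpha$; then $\CAP^*_\alpha$ is exactly the $E$-accumulation property, and $E$ is closed under supersets, since a subset of $\alpha$ that contains a set of order type $\alpha$ is itself a subset of $\alpha$ and hence again has order type $\alpha$. By Definition \ref{ordunif}, a filter $F$ over $\alpha$ satisfies $F\subseteq E$ if and only if it is ordinal-uniform. Under these identifications Condition (1) of Theorem \ref{prod} is Condition (1) of the corollary, and Condition (2) of Theorem \ref{prod} is Condition (2) of the corollary; so the equivalence (1)$\Leftrightarrow$(2) is immediate from Theorem \ref{prod}, and (1)$\Rightarrow$(3) is trivial because $X^\delta$ is a power of $X$.

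For the converse (3)$\Rightarrow$(2) I would use the two power bounds supplied by Conditions (4) and (5) of Theorem \ref{prod}, together with the observation that the $E$-accumulation property passes to smaller powers: if $X^\delta$ is $E$-compact and $\delta'\le\delta$, then writing $X^\delta\cong X^{\delta'}\times X^{\delta\setminus\delta'}$ and applying Proposition \ref{prop}, (1)$\Rightarrow$(2), shows $X^{\delta'}$ is $E$-compact. I would also note that the number $\delta_0$ of filters maximal among those contained in $E$ is at most $2^{2^{|\alpha|}}$, since every such filter is a subset of $\mathcal S(\alpha)$ and $|\mathcal S(\mathcal S(\alpha))|=2^{2^{|\alpha|}}$. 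Now argue by cases on which term realizes $\delta=\min\{2^{2^{|\alpha|}},\,|X|^{|\alpha|}\}$. If $\delta=|X|^{|\alpha|}$, then Condition (3) is literally Condition (5) of Theorem \ref{prod}, which yields (2). If $\delta=2^{2^{|\alpha|}}$, then $\delta\ge\delta_0$, so the monotonicity remark upgrades the hypothesis to ``$X^{\delta_0}$ is $E$-compact'', which is Condition (4) of Theorem \ref{prod}; and (4)$\Rightarrow$(3)$\Rightarrow$(2) from the theorem again gives (2). This closes the equivalence of (1), (2), (3).

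It remains to treat Condition (4) under the assumption $\alpha=\omega^\eta$. The implication (4)$\Rightarrow$(2) is trivial, since an ultrafilter is a filter and an ordinal-uniform ultrafilter is in particular an ordinal-uniform filter. For (2)$\Rightarrow$(4) I would start from an ordinal-uniform filter $F$ witnessing (2) and invoke Proposition \ref{dich}(1), which, because $\alpha=\omega^\eta$, extends $F$ to an ordinal-uniform ultrafilter $D\supseteq F$. The final step is monotonicity of compactness in the filter: if $F\subseteq D$, then every $F$-accumulation point of a sequence is also a $D$-accumulation point, because for each neighborhood $U$ the set $\{i\mid x_i\in U\}$ lies in $F\subseteq D$; hence $F$-compactness of $X$ implies $D$-compactness of $X$, giving (4).

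Since the substantive work is already carried out in Theorem \ref{prod} and Proposition \ref{dich}, everything here is a direct translation except for the cardinal estimate in (3)$\Rightarrow$(2). I therefore expect the case split on the minimum, and the verification that $\delta_0\le 2^{2^{|\alpha|}}$, to be the only point requiring care.
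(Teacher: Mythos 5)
Your proof is correct and takes essentially the same route as the paper, whose entire argument is the single sentence that the corollary is immediate from Theorem \ref{prod} with $E$ the set of subsets of $\alpha$ of order type $\alpha$, together with Remarks \ref{part} and \ref{unifordcard} and Proposition \ref{dich}. You have simply made explicit the details the paper leaves to the reader: that $E$ is closed under supersets, that filters contained in $E$ are exactly the ordinal-uniform ones, the bound $\delta_0 \le 2^{2^{|\alpha|}}$ on the number of maximal filters, the case split on the minimum in (3) together with the passage to smaller powers via Proposition \ref{prop}, and the upward monotonicity of compactness along $F \subseteq D$.
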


 \begin{proof} 
The Corollary  is immediate from Theorem \ref{prod}, 
 by taking $E$ to be the set of all subsets of $\alpha$ of order type $\alpha$, and
using Remarks \ref{part} and \ref{unifordcard} and Proposition \ref{dich}.
\end{proof}

The case $\alpha= \omega $ of Corollary \ref{saks} is due
to Ginsburg  and Saks \cite[Theorem 2.6]{GS}, 
via the equivalence between $ \CAP^* _{ \omega } $   
and countable compactness. For $\alpha$ an arbitrary \emph{cardinal}, 
the equivalence of (1) and (2) in Corollary \ref{saks} appears in
\cite[Theorem 6.2]{Sa}. Related results appear in 
 \cite[Theorem 5.6]{ScSt} and \cite[Corollary 2.15]{GF1}.

\section{Concluding remarks} \labbel{concl}

Many generalizations of our results are possible, 
and involve very little new technical tools.

For example, we can introduce a notion relative to subspaces.

\begin{definition} \labbel{seqy}
Let $I$ be a set,  $E$ be a subset of $ \mathcal S(I)$, and $Y$ be a subspace of 
some topological space $X$.

We say that \emph{$Y$ satisfies the $E$-accumulation property in $X$}
if and only if, for  every $I$-indexed sequence $(y_i) _{i \in I} $ 
of elements of $Y$,
there is $x \in X$ such that, in $X$,
$x$ is an  accumulation point of 
$(y_i) _{i \in I} $ in the sense of $E$.

If we take $Y=X$, we reduce to the case dealt in the last statement of Definition \ref{seq}.
 \end{definition}

We now show that, for ``small'' $E$, the   $E$-accumulation
property is trivially false, for $T_1$ spaces with more than one point.

\begin{proposition} \labbel{esmall}
Suppose that $I$ is a set, $E \subseteq \mathcal S (I)$,
and there is a set $J \subseteq I$ such that 
neither $J$ nor $I \setminus J$ belong to $E$.
In particular, this assumption applies when $E$ is a filter on $I$ 
which is not an ultrafilter.

If some topological space $X$ satisfies the   
$E$-accumulation
property, then, for every pair
$x, y$ of distinct points of $X$, there is some point 
$z $ of $ X$ such that every neighborhood $U$ of $z $ 
contains both $x$ and $y$. 
 \end{proposition}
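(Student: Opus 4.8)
The plan is to produce the required point as an $E$-accumulation point of a suitable two-valued sequence, and then, in the degenerate case, to replace it by $x$ itself. Fix distinct points $x, y \in X$ and let $J \subseteq I$ be as in the hypothesis, so that neither $J$ nor $I \setminus J$ lies in $E$. Define the $I$-indexed sequence $(x_i)_{i \in I}$ by $x_i = x$ for $i \in J$ and $x_i = y$ for $i \in I \setminus J$. Since $X$ satisfies the $E$-accumulation property, this sequence has some $E$-accumulation point $z \in X$. The first step is the observation that, for every open neighborhood $U$ of $z$, the set $\{ i \in I \mid x_i \in U \}$ equals $I$, $J$, $I \setminus J$, or $\emptyset$, according to whether $U$ contains both, only $x$, only $y$, or neither of $x, y$. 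As $z$ is an $E$-accumulation point, this set belongs to $E$; since $J \notin E$ and $I \setminus J \notin E$, the two middle possibilities are excluded. Hence every open neighborhood of $z$ contains either both of $x, y$ or neither; call this the \emph{both-or-neither} property.

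Next I would split into two cases. If every open neighborhood of $z$ contains both $x$ and $y$, then $z$ is already the desired point, since every neighborhood of $z$ contains an open neighborhood of $z$ and thus contains both $x$ and $y$. Otherwise there is an open neighborhood $O$ of $z$ with $x \notin O$ and $y \notin O$. In this case I claim that $x$ itself is the desired point; that is, every neighborhood of $x$ contains $y$ (it trivially contains $x$). Indeed, suppose to the contrary that some open neighborhood $U$ of $x$ omits $y$. Then $O \cup U$ is an open neighborhood of $z$ that contains $x$ (from $U$) but not $y$ (since $y$ lies in neither $O$ nor $U$), contradicting the both-or-neither property. Therefore every open neighborhood of $x$ contains $y$, and hence so does every neighborhood of $x$.

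The step I expect to be the main obstacle is exactly this second case, which is forced upon us precisely when $\emptyset \in E$: then $z$ may fail to have all of its neighborhoods meet $\{x, y\}$, so $z$ need not by itself witness the conclusion. For a filter, and more generally whenever $\emptyset \notin E$, this case cannot occur, as $\{ i \in I \mid x_i \in O \} = \emptyset \notin E$ would contradict $z$ being an $E$-accumulation point, and only the first case survives. The union $O \cup U$ is what resolves the obstacle: enlarging the $x$-neighborhood by the ``empty'' neighborhood $O$ of $z$ turns a hypothetical separation of $x$ from $y$ near $x$ into a separation near $z$, which the both-or-neither property forbids. Once this is in place, both cases deliver a point all of whose neighborhoods contain both $x$ and $y$, completing the argument.
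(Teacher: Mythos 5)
Your proof is correct and takes the same approach as the paper: apply the $E$-accumulation property to the two-valued sequence that is $x$ on $J$ and $y$ on $I\setminus J$, and read off from $J\notin E$ and $I\setminus J\notin E$ that every neighborhood of the accumulation point contains both of $x,y$ or neither. You are in fact more careful than the paper's one-line proof, which tacitly takes the accumulation point $z$ itself as the witness and does not address the degenerate possibility $\emptyset\in E$ (where some neighborhood of $z$ may miss both $x$ and $y$); your second case, passing to $x$ and using the open set $O\cup U$, handles exactly that situation.
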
  

\begin{proof}
Suppose that $x \not=y \in X$.
Consider the sequence defined by 
$x_i= x$, if $i \in J$, and
 $x_i= y$, if $i \in I \setminus J$,
and apply the $E$-accumulation
property to this sequence.
 \end{proof} 

In particular, if $X$ is a $T_1$ topological space,
$| X| > 1$ and $X$ satisfies $ \CAP ^*_{ \alpha } $,
then $\alpha = \omega ^ \eta$ (ordinal exponentiation),
for some ordinal $\eta$. Otherwise (for example, expressing 
$\alpha$ in normal form) $\alpha$ is the union of two disjoint
subsets each of order type $<\alpha$, and we can apply Proposition \ref{esmall}.

Thus, (full) compactness does not necessarily imply 
$ \CAP ^*_{ \alpha } $, for every ordinal $\alpha$,
though it is well known that it implies $ \CAP _{ \lambda} $, for every infinite
\emph{cardinal} $\lambda$.

We can also give a two-ordinals generalization of Definition \ref{capa}.

\begin{definition} \labbel{capab} 
If $ \beta , \alpha$ are ordinals,  $X$ is a topological space, 
and $( x_ \gamma ) _{ \gamma \in \alpha } $ is a sequence of elements of $X$,
we say 
that $x \in X$ is a
\emph{$\beta$-$\alpha$-complete accumulation point} of 
$( x_ \gamma ) _{ \gamma \in \alpha } $
if and only if, for every neighborhood $U$ of $x$,
the set $\{ \gamma \in \alpha \mid x_ \gamma \in U\}$ 
has order type $\geq \beta $.

We say that $X$ satisfies
the \emph{$\beta$-$\alpha$-complete accumulation property}, 
 if and only if 
every $\alpha$-indexed sequence $( x_ \gamma ) _{ \gamma \in \alpha } $
of elements of $X$ has some $ \beta $-$ \alpha$-complete accumulation point in $X$.
\end{definition}




\begin{thebibliography}{00}






\bibitem[E]{E}   R. Engelking, \emph{General Topology}, 2$^{\rm nd}$ revised edition, Sigma
Ser. Pure Math., Vol. 6, Heldermann, Berlin (1989).
 

 
\bibitem[G]{GF1} S. Garcia-Ferreira, \emph{Some remarks on initial $\alpha$-compactness, $<\alpha$-boundedness
   and $p$-compactness},
   Topology Proc. {\bf 15}  (1990), 11--28.


\bibitem[GS]
{GS} J. Ginsburg and V. Saks,
\emph{Some applications of ultrafilters in topology},
Pacific J. Math. {\bf  57} (1975), 403--418.




\bibitem
[L] {tproc2} P. Lipparini,  
\emph{Some compactness properties related to pseudocompactness
and ultrafilter convergence},  Topol. Proc.  \textbf{40} (2012), 29-51. 
 

\bibitem[OC]{ordcpn} P. Lipparini, \emph{Ordinal Compactness}, arXiv:1012.4737.



\bibitem[MG]{mostgen} P. Lipparini, \emph{The ``most general''  covering property}, arXiv:1105.4342.

 \bibitem[Sa]{Sa} V. Saks, \emph{Ultrafilter invariants in
   topological spaces}, Trans. Amer. Math. Soc. \textbf{241} (1978), 79--97.

\bibitem[SS]{ScSt} C. T. Scarborough, A. H. Stone, \emph{Products of    
   nearly compact spaces}, Trans. Amer. Math. Soc. \textbf{124} (1966), 131--147.



\end{thebibliography}
\end{document}